\title{{\bf The Grushin plane and quasiconformal Jacobians}}
\author{\Large {William Meyerson}}
\date{}
\newtheorem{teo}{Theorem}
\newtheorem{lemma}[teo]{Lemma}
\newtheorem{defi}[teo]{Definition}
\newtheorem{prop}[teo]{Proposition}
\newtheorem{que}[teo]{Question}
\begin{document}
\maketitle
\begin{abstract}
We construct a quasiconformal map from the Grushin plane to the Euclidean plane.  Then, we generalize the Grushin plane and explain how the Grushin plane and its generalizations can serve as intermediaries in dealing with quasiconformal maps on Euclidean spaces.  In particular, we construct a family of quasiconformal embeddings of the Euclidean plane into larger Euclidean spaces whose Jacobians fail to be locally integrable on a line.  
\end{abstract}

\begin{defi}

The \textbf{Grushin plane} $G$  is the metric completion of the Riemannian metric space
$\{(x, y) \in \textbf{R}^2: x \neq 0\}$ equipped with Riemannian metric $ds^2 = dx^2 + x^{-2} dy^2$.
\end{defi}

This Riemannian metric naturally gives $G$ the coordinate structure of $\textbf{R}^2$ along with a   geodesic structure.  This geodesic structure makes $G$ a metric space under the Carnot-Carath\'eodory distance.  We define a \textbf{horizontal} vector to be any vector in $\textbf{R}^2$ of the form $(t, 0)$ where $t$ is real, and we define the Carnot-Carath\'eodory distance as follows:  

\begin{defi}
Given two points $g, h \in G$, let $\Gamma_{g,h}$ be the set of all curves

$$\gamma:  [0, 1] \rightarrow G$$
with $\gamma(0) = g$, $\gamma(1) = h$, and $\dot{\gamma}(t)$ is horizontal
for each $t \in [0, 1]$ with $\gamma(t)$ on the $y$-axis.  Then the \textbf{Carnot-Carath\'eodory distance} from $g$ to $h$ is
$$d_{CC}(g, h) = \inf_{\gamma \in \Gamma_{g, h}} \int_0^1 
\sqrt{(\dot{x}(t))^2 + x(t)^{-2} (\dot{y}(t))^2} dt$$
(where $\gamma(t) = (x(t), y(t))$ for each $t$).
\end{defi}

\noindent
Note that a rectifiable curve must have horizontal tangent at each point where it 
crosses the $y$-axis.

\begin{defi}
For $z_1, z_2 \in G$, the \textbf{Grushin plane quasidistance} $d(z_1, z_2)$ is defined as follows:  
$$d(z_1, z_2) := \textrm{max}\{|x_1 - x_2|, \textrm{min}\{\sqrt{|y_1, - 
y_2|}, \frac{|y_1 - y_2|}{\max\{|x_1|, |x_2|\}}\}\}$$
where $z_1 = (x_1, y_1)$ and $z_2 = (x_2, y_2)$ with $x_i, y_i \in \textbf{R}$.
\end{defi}

\noindent
It is known (cf \cite{Be}) that there exists $C > 1$ such that for each $z_1, z_2 \in G$,

$$C^{-1} d_{CC}(z_1, z_2) \leq d(z_1, z_2) \leq C d_{CC}(z_1, z_2).$$
From this inequality, it is clear that except on the vertical axis, the Grushin plane is locally 
bi-Lipschitz to Euclidean space (but with a constant that blows up as we 
get closer to the axis).  However, the distance between two points on the 
vertical axis is proportional to the square root of their Euclidean distance.

In other words, the Grushin plane is a union of a 
(disconnected) Riemannian manifold and a line of Hausdorff dimension two, 
making it a sub-Riemannian manifold of both Euclidean and Hausdorff 
dimension two.

To motivate our results, a few more definitions are needed.

\begin{defi}
Let $(X, d)$ and $(Y, d')$ be metric spaces and $f: X \rightarrow Y$.    Also let $\eta: (0, \infty) \rightarrow (0, \infty)$ be an increasing homeomorphism.  If for all $z_1, z_2, z_3 \in X$,
$$\frac{d_Y(f(z_3), f(z_1))}{d_Y(f(z_2), f(z_1))} \leq 
\eta(\frac{d_X(z_3, z_1)}{d_X(z_2, z_1)})$$
then we say $f$ is \textbf{$\eta$-quasisymmetric}.
\end{defi}

\begin{defi}
Let $(X, d)$ and $(Y, d')$ be metric spaces and $f: X \rightarrow Y$.   If there exists $C > 0$ such that for all $z_1, z_2, z_3 \in X$ with $d_X(z_3, z_1) \leq d_X(z_2, z_1)$, $d_Y(z_3, z_1) \leq C d_Y(z_2, z_1)$ then we say $f$ is \textbf{$C$-weakly quasisymmetric}.
\end{defi}

\begin{defi}
 A map $F:  \textbf{R}^2 \rightarrow \textbf{R}^N$ is \textbf{absolutely continuous on lines} (ACL) if, for each closed rectangle $R$ in the domain of $F$ with sides parallel to the coordinate axes, $F|R$ is absolutely continuous on almost all line segments parallel to the sides of $R$.
\end{defi}

\begin{defi}
The \textbf{first Heisenberg group} $H_1$ is the set 

$$\{(z, t): z \in \textbf{C}, t \in \textbf{R}\}$$

\noindent equipped with the following group law:

$$(z, t)(w, s) = (z + w, t + s - \frac{1}{2} \Im(z\bar{w}))$$

\noindent where $\Im$ denotes imaginary part.
\end{defi}

Although by \cite{Ar} $G$ is a metric quotient of $H_1$, the two spaces $G$ and $H_1$ can have very different metric properties.  For example (cf \cite{M}), if $\phi$ is a Lipschitz map from any bounded subset of $H_1$ to any Euclidean space, the domain of $\phi$ can be partitioned into a union of sets $F_1, \dots, F_k, Z$ such that $\phi|F_k$ is bi-Lipschitz and $\phi(Z)$ has arbitrarily small Hausdorff content with respect to the Hausdorff dimension of $H_1$ (which is 4).  However, the same does not hold for the Grushin plane.

Also, $H_1$ does not embed quasisymmetrically into any Euclidean space (see, for example, \cite{P}).  By contrast, for $G$ we have the following result:

\begin{teo}
Let $F:  G \rightarrow \textbf{R}^2$ be defined as follows:   $F(x, y) = (x |x|, 
y)$.  Then $F$ is quasisymmetric.
\end{teo}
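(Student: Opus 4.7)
The plan is to derive an explicit two-sided comparison between the Grushin quasidistance $d(z_1, z_2)$ and the Euclidean distance $d_E(F(z_1), F(z_2))$ and read quasisymmetry off directly. Throughout I write $z_i = (x_i, y_i)$, $a_{ij} = |x_i - x_j|$, $b_{ij} = |y_i - y_j|$, and $M_{ij} = \max\{|x_i|, |x_j|\}$; $A \asymp B$ means $A/B$ is bounded above and below by universal positive constants.

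First I would compute both sides. In the target, $d_E$ is comparable to the $\ell^\infty$ distance, and a short case-check on the signs of $x_i, x_j$ gives $|x_i|x_i| - x_j|x_j|| \asymp a_{ij} M_{ij}$ (use $x_i^2 - x_j^2 = (x_i-x_j)(x_i+x_j)$ when the signs agree, and compare $x_i^2 + x_j^2$ to $(|x_i| + |x_j|) M_{ij} = a_{ij} M_{ij}$ when they differ), so $d_E(F(z_i), F(z_j)) \asymp \max\{a_{ij} M_{ij}, b_{ij}\}$. In the source, I split on whether $b_{ij} \leq M_{ij}^2$ (in which case the inner min in the definition of $d$ equals $b_{ij}/M_{ij}$ and $d(z_i, z_j) \asymp \max\{a_{ij}, b_{ij}/M_{ij}\} \leq 2 M_{ij}$) or $b_{ij} \geq M_{ij}^2$ (in which case the min equals $\sqrt{b_{ij}}$ and $d(z_i, z_j) \asymp \sqrt{b_{ij}} \geq M_{ij}$). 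Multiplying through in each regime yields the unified identity
$$d_E(F(z_i), F(z_j)) \asymp d(z_i, z_j) \cdot \max\{M_{ij}, d(z_i, z_j)\}. \qquad (*)$$

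With $(*)$ in hand, quasisymmetry is essentially immediate: for a triple $z_1, z_2, z_3$ and $t := d(z_1, z_3)/d(z_1, z_2)$, the identity $(*)$ reduces the problem to bounding $\max\{M_{13}, d(z_1, z_3)\}/\max\{M_{12}, d(z_1, z_2)\}$ in terms of $t$. The triangle-type inequality $M_{13} \leq |x_1| + a_{13} \leq M_{12} + d(z_1, z_3) = M_{12} + t\, d(z_1, z_2)$ controls the numerator by $\max\{1,t\}(M_{12} + d(z_1, z_2))$, so the ratio is at most $2\max\{1, t\}$, and $F$ is $\eta$-quasisymmetric with $\eta(t) = C(t + t^2)$. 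The main obstacle is the bookkeeping for $(*)$: one must track the four sign combinations of $(x_i, x_j)$ in the target computation alongside the two regimes governing the inner min, with extra care near the $y$-axis where $M_{ij}$ may vanish and only the $\sqrt{b_{ij}}$ branch of the min contributes.
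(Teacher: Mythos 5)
Your argument is correct, and at its core it establishes the same two-sided estimate that drives the paper's proof: the paper's Proposition shows $|F(z)-F(z')| \asymp f_z(d(z,z'))$ with $f_z(r)$ equal (up to constants) to $\max\{|x|r, r^2\} = r\max\{|x|,r\}$, which is exactly your identity $(*)$ once one observes that $\max\{M_{ij}, d\} \asymp \max\{|x_i|, d\}$ in every regime. The differences are in the packaging and in the final step. The paper runs a basepoint-centered case analysis (splitting on $d(z,z') \le 3x$ versus $d(z,z') > 3x$, with role-switching subcases to handle $|x'| > |x|$), and then deduces quasisymmetry in one line from the sublinearity $f_z(\epsilon r) \le \epsilon f_z(r)$ for $\epsilon \in (0,1)$ — which on its face only addresses ratios $t \le 1$, leaving the superlinear bound for $t>1$ implicit. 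Your symmetric formulation (splitting instead on $b_{ij}$ versus $M_{ij}^2$ to resolve the inner min) avoids the role-switching entirely, and your direct estimate of the ratio via $M_{13} \le M_{12} + d(z_1,z_3)$ handles all $t$ at once and produces an explicit gauge $\eta(t) = C(t+t^2)$. So what your route buys is a cleaner, fully symmetric statement of the key comparison and a more complete derivation of $\eta$; what it costs is essentially nothing beyond the sign bookkeeping you already flag. One small point worth making explicit in a final write-up: the quasidistance $d$ is only comparable to $d_{CC}$, so you should state at the outset (as the paper implicitly does) that it suffices to verify quasisymmetry with respect to $d$, since replacing the metric by a comparable quasidistance only changes $\eta$ by fixed multiplicative constants.
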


We begin by showing $F$ is weakly quasisymmetric.  This will be an immediate consequence of the following proposition:

\begin{prop}
There exists $C > 0$ such that for each $z \in G$, there exists an increasing function $f_z:  \textbf{R}_+ \rightarrow \textbf{R}_+$ such that whenever $z' \in G$, 
$$C^{-1} f_z(d(z, z')) \leq |F(z) - F(z')| \leq C f_z(d(z, z')).$$
\end{prop}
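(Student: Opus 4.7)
My strategy is to define, for $z = (x, y)$,
$$f_z(t) = |x|\, t + t^2,$$
which is manifestly strictly increasing on $\mathbb{R}_+$, and to show that both $|F(z) - F(z')|$ and $f_z(d(z, z'))$ are comparable, up to universal multiplicative constants, to the auxiliary quantity
$$A(z, z') := M\,|x - x'| + |y - y'|, \qquad M := \max\{|x|,\, |x'|\}.$$

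First I would estimate $|F(z) - F(z')|$ in terms of $\delta := |x - x'|$ and $\delta_y := |y - y'|$. A short case analysis on the signs of $x, x'$ yields $\bigl| x|x| - x'|x'| \bigr| \asymp M\delta$: when $x$ and $x'$ share a sign this expression equals $(|x| + |x'|)\,\delta$, while when they have opposite signs it equals $x^2 + x'^2$, which is comparable to $(|x| + |x'|)\,\delta$ because in that case $\delta = |x| + |x'|$. Combined with the trivial second-coordinate bound, $|F(z) - F(z')| \asymp M\delta + \delta_y = A(z, z')$.

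Next, set $t := d(z, z')$ and aim for $M t + t^2 \asymp A(z, z')$. Since $t \geq \delta$, the bound $M t \geq M\delta$ is immediate. To dominate $\delta_y$ I would split on whether $M \leq \sqrt{\delta_y}$ (then the inner minimum in the definition of $d$ equals $\sqrt{\delta_y}$, so $t \geq \sqrt{\delta_y}$ and hence $t^2 \geq \delta_y$) or $M > \sqrt{\delta_y}$ (then the inner minimum equals $\delta_y/M$, so $t \geq \delta_y/M$ and hence $M t \geq \delta_y$); in either case $M t + t^2 \gtrsim A(z, z')$. For the reverse inequality I would inspect each possible value of $t$: when $t = \delta$, the bound $\delta \leq 2M$ controls $t^2$; when $t = \sqrt{\delta_y}$, the regime hypothesis $M \leq \sqrt{\delta_y}$ controls $M t$; when $t = \delta_y/M$, the hypothesis $M \geq \sqrt{\delta_y}$ controls $t^2$. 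Finally, $|x| \leq M \leq |x| + \delta \leq |x| + t$ gives $f_z(t) \asymp M t + t^2$, completing the chain.

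The delicate subcase is $t = \delta_y/M$ when $|x| \ll M$, since the first term $|x|\, t$ of $f_z(t)$ can be much smaller than $\delta_y$, and one must squeeze the required lower bound out of $t^2$. The key is the inequality $M - |x| \leq |x - x'| = \delta \leq t$: if $t \geq M$ then $t^2 \geq M t = \delta_y$, while otherwise $|x| \geq M - t$ gives $|x|\, t + t^2 \geq (M - t)\, t + t^2 = M t = \delta_y$; thus $f_z(t) \gtrsim \delta_y$ in every configuration, which is the heart of the argument.
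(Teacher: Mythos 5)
Your proof is correct, and it takes a recognizably different route from the paper's. The paper fixes $z=(x,0)$ with $x\geq 0$ by symmetry and then runs a case analysis on the geometric position of $z'$ relative to $z$ (origin; $d(z,z')\leq 3x$; $d(z,z')>3x$ with three subcases, two of which are handled by swapping the roles of $z$ and $z'$), producing a piecewise weight function equal to $|x|r$ near $z$ and $r^2$ far away. You instead give a single closed-form $f_z(t)=|x|t+t^2$ (which is uniformly comparable to the paper's piecewise choice) and route both sides of the comparison through the auxiliary quantity $A(z,z')=M|x-x'|+|y-y'|$ with $M=\max\{|x|,|x'|\}$; your only case analysis is on which term of the max/min in the quasidistance is active, plus the sign analysis giving $\bigl|x|x|-x'|x'|\bigr|\asymp M|x-x'|$. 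All the individual estimates check out: $\delta\leq 2M$ handles the $t=\delta$ regime, the threshold $M\lessgtr\sqrt{\delta_y}$ correctly identifies the inner minimum, and the passage from $M$ to $|x|$ via $M\leq|x|+\delta\leq|x|+t$ is exactly the point your final paragraph isolates. What your organization buys is a symmetric argument needing no normalization of $z$ or role-swapping, and an explicit $f_z$ for which the property $f_z(\epsilon r)\leq\epsilon f_z(r)$ used in the paper's subsequent proof of the theorem is immediate from the formula; what the paper's version buys is a more transparent geometric picture of where each of the two scaling behaviors ($|x|r$ versus $r^2$) comes from.
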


\begin{proof}
Because $d_{CC}$ and $d$ are invariant under both vertical translation and reflection in the $y$ axis, we may assume $z = 
(x, 0)$ with $x \geq 0$.

Also, we may assume that the absolute value signs denote distance with respect to the $L^\infty$ norm on $\textbf{R}^2$ (this does not increase any distance, nor does it decrease distances by more than a multiplicative factor of $\sqrt 2$).

We then complete the proof by considering several separate cases which encompass every resulting possibility.

\textbf{Case 1:  $x = 0$, i.e. $z$ is the origin.} In this case, we note that if $z' = (x', y')$ satisfies $\sqrt{|y'|} 
\geq \frac{|y'|}{|x'|}$, then $|x'| \geq \sqrt{|y'|}$ so that 
$$d(z, z') = \textrm{max}\{|x'|, \sqrt {|y'|}\}.$$
Since $F(z) = (0, 0)$ and $F(z') = (x' |x'|, y')$, we note that 
$$|F(z') - F(z)| = \textrm{max}\{|x'|^2, |y'|\} = d(z, z')^2$$ and we can take 
$f_z(r) = r^2$ for all such $z$.

\textbf{Case 2:  $x > 0$ and $d(z, z') \leq 3x$.} Consider $z' = (x', y')$ with $d(z, z') = r$.  Since $|x'| \leq 4 |x|$ and 
$\sqrt {|y'|} \leq 4x$, we have
$$\frac{|y'|}{\textrm{max}\{|x|, |x'|\}} \leq \frac{|y'|}{x} \leq 4 
\frac{|y'|}{\textrm{max}\{|x|, |x'|\}}$$
while $\frac{|y'|}{|x|} \leq 4 \sqrt {|y'|}$
so that up to a multiplicative factor of 4, we can take
$$d(z, z') = \textrm{max}\{|x - x'|, \frac{|y'|}{x}\}.$$
In this case, $$|F(z') - F(z)| = \textrm{max}\{x |x| - x' |x'|, |y'|\}.$$
Note that for $x' \geq 0$, $x |x| - x' |x'| = x^2 - x'^2 = (x - x')(x + 
x')$ while for $x' < 0$, $x |x| - x' |x'| \in (x^2, 5x^2]$ as $x' \geq 
-2x$.  In any event, $\frac{|x |x| - x' |x'| |}{|x| |x - x'|} \in [\frac{1}{3}, 5]$ whenever $x \neq x'$.  Similarly, $|y| = \frac{|y|}{x} * |x|$ so here we take $f_z(r) = |x|r$ and 
note that $|F(z') - F(z)| \in [\frac{1}{3} f_z(r), 5 f_z(r)]$ (and therefore, if 
we were to use our original quasidistance instead, 
$|F(z') - F(z)| \in [\frac{1}{12} f_z(r), 20 f_z(r)]$).

\textbf{Case 3: $x > 0$ and $d(z, z') > 3x$.}  Writing $z' = (x', y')$ with $d(z, z') = r$ we now consider three subcases.

\textbf{Case 3.1:  $|x'| \leq x$.}  Here, we note that quasidistance cannot be reached via $|x - x'|$ (as $|x - x'| \leq 2|x|$), so that $$d(z, z') = \textrm{min}\{\sqrt{|y'|}, \frac{|y'|}{x}\}.$$  Further, if 
$\sqrt {|y'|} \geq \frac{|y'|}{x}$, then $x \geq \sqrt {|y'|}$ so $d(z, 
z') \leq x$ producing a contradiction.  Therefore, we conclude $$d(z, z') = 
\sqrt {|y'|}.$$
This means that $\sqrt {|y'|} > 3x$; writing $F(z) = (w_1, w_2)$ and $F(z') = (w'_1, w'_2)$, we note
$|w_1 - w'_1| \leq 2x^2$ while $|w_2 - w'_2| = |y'| > 9x^2$.  Therefore, $|F(z) - F(z')| = (y')^2$.  
As $(y')^2 = r^2$ in this case, our candidate for $f_z(r)$ for this subcase is $r^2$.

\textbf{Case 3.2:  $\frac{r}{3} > |x'| > |x|$.}  By switching the role of $z$ and $z'$ (and translating by $(0, -y')$), we reduce to Case 3.1 with the same value of $r$ so we take $f_z(r) = r^2$.

\textbf{Case 3.3:  $|x'| \geq \frac{r}{3}$.} By switching the role of $z$ and $z'$ (and translating by $(0, -y')$), we 
reduce to Case 2 with the same value of $r$.  Because $$|x'| \leq |x| + r < 2r,$$ 
the value of $f_{z'}(r)$ created in Case 2, $|x'|r$, is in the range $[\frac{r^2}{3}, 2r^2]$.
Therefore, we can use $f_z(r) = r^2$ here too.

As every possibility is accounted for, the proposition is shown.
\end{proof}

\begin{proof}[Proof of theorem]
This follows by noting that with the $f_z$ functions constructed above, $f_z(\epsilon r) \leq \epsilon f_z(r)$ for all $\epsilon \in (0, 1)$.
\end{proof}

We can generalize the above result by slightly changing the metric space on the Grushin plane.

\begin{defi}

Let $\alpha > 0$.  The \textbf{generalized Grushin plane} $G_\alpha$  is defined as the metric completion of the Riemannian metric space $\{(x, y) \in \textbf{R}^2: x \neq 0\}$ equipped with Riemannian metric $ds^2 = dx^2 + |x|^{-\alpha} dy^2$.
\end{defi}

Once again, this Riemannian metric naturally gives $G_\alpha$ the coordinate structure of $\textbf{R}^2$ along with a geodesic structure.  Note that $G_2$ is what was originally called $G$.  This geodesic structure makes $G_\alpha$ a metric space under the Carnot-Carath\'eodory distance, defined below: 

\begin{defi}
Given two points $g, h \in G_\alpha$, let $\Gamma_{g,h}$ be the set of all curves

$$\gamma:  [0, 1] \rightarrow G_\alpha$$
with $\gamma(0) = g$, $\gamma(1) = h$, and $\dot{\gamma}(t)$ is horizontal
for each $t \in [0, 1]$ with $\gamma(t)$ on the $y$-axis.  Then the \textbf{Carnot-Carath\'eodory distance} from $g$ to $h$ is
$$d_{CC}(g, h) = \inf_{\gamma \in \Gamma_{g, h}} \int_0^1 
\sqrt{(\dot{x}(t))^2 + |x(t)|^{-\alpha} (\dot{y}(t))^2} dt$$
(where $\gamma(t) = (x(t), y(t))$ for each $t$).
\end{defi}

\begin{defi}
Given $z_1, z_2 \in G_\alpha$, the \textbf{Grushin plane quasidistance} $d(z_1, z_2)$ is defined as follows:  

$$d(z_1, z_2) := \textrm{max}\{|x_1 - x_2|, \textrm{min}\{|y_1 - 
y_2|^{\frac{2}{2 + \alpha}}, \frac{|y_1 - y_2|}{\max\{|x_1|^{.5 \alpha}, 
|x_2|^{.5 \alpha}\}}\}\}$$ where $z_1 = (x_1, y_1)$ and $z_2 = (x_2, y_2)$ with $x_i, y_i \in \textbf{R}$.
\end{defi}

\begin{lemma}
There exists $C > 1$ (dependent on $\alpha$) such that for each $z_1, z_2 \in G$,
$$C^{-1} d_{CC}(z_1, z_2) \leq d(z_1, z_2) \leq C d_{CC}(z_1, z_2).$$
\end{lemma}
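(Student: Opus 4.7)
The plan is to prove each inequality separately after reducing via natural symmetries. The Riemannian metric (and therefore $d_{CC}$) and each term in the definition of $d$ are invariant under vertical translation and reflection in the $y$-axis, and they all scale by $\lambda$ under the anisotropic dilation $(x,y)\mapsto(\lambda x,\lambda^{1+\alpha/2}y)$. Using these I may normalize $y_1=0$ and $x_1\geq 0$; throughout, set $y=y_2$ and $M=\max(|x_1|,|x_2|)$.

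For the lower bound $d(z_1,z_2)\leq C\,d_{CC}(z_1,z_2)$, I take an arbitrary admissible horizontal curve $\gamma(t)=(x(t),y(t))$ of length $L$ joining $z_1$ to $z_2$ and set $X=\sup_t|x(t)|$. Integrating the pointwise inequalities $|\dot x|\leq\sqrt{\dot x^2+|x|^{-\alpha}\dot y^2}$ and $|x|^{-\alpha/2}|\dot y|\leq\sqrt{\dot x^2+|x|^{-\alpha}\dot y^2}$ yields $L\geq|x_1-x_2|$ and $L\geq X^{-\alpha/2}|y|$. In addition, since $|x(t)|$ must ascend from $|x_1|\leq M$ to $X$ and then descend to $|x_2|\leq M$, one has $L\geq\int|\dot x|\,dt\geq 2(X-M)$. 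If $X\leq 2M$, the second estimate already gives $L\geq 2^{-\alpha/2}|y|/M^{\alpha/2}$; if $X>2M$, the third gives $L\geq X$, so combining with the second, $L^{1+\alpha/2}\geq L\,X^{\alpha/2}\geq|y|$, whence $L\geq|y|^{2/(2+\alpha)}$. In every case $L$ is bounded below by a constant multiple of $\min\{|y|^{2/(2+\alpha)},|y|/M^{\alpha/2}\}$, which with $L\geq|x_1-x_2|$ exactly reproduces the quasidistance.

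For the upper bound $d_{CC}(z_1,z_2)\leq C\,d(z_1,z_2)$ I construct explicit piecewise horizontal/vertical paths. Set $X^*=\max(M,|y|^{2/(2+\alpha)})$. In the same-side case (WLOG $x_1,x_2\geq 0$) with $M\geq|y|^{2/(2+\alpha)}$, use $(x_1,0)\to(M,0)\to(M,y)\to(x_2,y)$, whose cost is $|x_1-x_2|+|y|/M^{\alpha/2}\leq 2\,d(z_1,z_2)$. Otherwise, and in the opposite-sign case, use the excursion curve $(x_1,0)\to(\sigma X^*,0)\to(\sigma X^*,y)\to(x_2,y)$ for a suitable sign $\sigma$; the only possible crossings of the $y$-axis occur along the horizontal first and third segments, so the curve is admissible. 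Its length is at most $2(X^*+M)+|y|/(X^*)^{\alpha/2}\leq 5X^*$ by the choice of $X^*$. To finish, one checks $X^*\lesssim d(z_1,z_2)$ in all remaining subcases: either $X^*=|y|^{2/(2+\alpha)}\leq d$ directly, or $X^*=M\leq|x_1-x_2|=|x_1|+|x_2|\leq d$ in the opposite-sign case.

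The main obstacle is locating the correct excursion radius and handling the opposite-sign subcase. The value $X^*\asymp|y|^{2/(2+\alpha)}$ is exactly what minimizes $2X^*+|y|/(X^*)^{\alpha/2}$, and the corresponding minimum is $\asymp|y|^{2/(2+\alpha)}$; this is precisely why the exponents $2/(2+\alpha)$ and $\alpha/2$ appear in $d$. The bookkeeping in the opposite-sign, axis-crossing case—ensuring that the horizontal cost $|x_1|+|x_2|$ incurred by the forced detour is absorbed into the $|x_1-x_2|$ term of $d$—is the one step that needs genuine care; the remainder is a mechanical case split matching the $\max$/$\min$ structure of the definition of $d$.
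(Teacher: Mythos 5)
Your proof is correct, and it takes a genuinely more complete route than the paper's. The paper reduces (without justification) to the vertically separated case $z_1=(x,0)$, $z_2=(x,\epsilon)$, asserts that ``basic geometric reasoning'' lets one replace the optimal path by a horizontal--vertical--horizontal competitor, and then finds the optimal excursion radius $x+t=K\epsilon^{2/(2+\alpha)}$ by differentiating $2t+\epsilon(x+t)^{-\alpha/2}$; the two cases $t>0$ and $t\leq 0$ correspond exactly to your two regimes $X^*=|y|^{2/(2+\alpha)}$ and $X^*=M$. What you add is precisely what the paper leaves implicit: the lower bound $d\lesssim d_{CC}$ is proved against an \emph{arbitrary} admissible curve by integrating the pointwise inequalities for $|\dot x|$ and $|x|^{-\alpha/2}|\dot y|$ together with the excursion estimate $L\geq 2(X-M)$, rather than by assuming the optimal path already has the special form; and the upper bound is established for all pairs of points, including the opposite-sign case where the curve is forced across the $y$-axis and the detour cost $|x_1|+|x_2|$ must be absorbed into $|x_1-x_2|$. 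The paper's calculus step buys a transparent explanation of where the exponent $2/(2+\alpha)$ comes from (you recover the same insight by noting that $X^*\asymp|y|^{2/(2+\alpha)}$ minimizes $2X^*+|y|(X^*)^{-\alpha/2}$), while your version buys a self-contained proof of both inequalities. One cosmetic point: you invoke the anisotropic dilation $(x,y)\mapsto(\lambda x,\lambda^{1+\alpha/2}y)$ as an available normalization but never actually use it; it could be dropped without loss.
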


\begin{proof}
It suffices to consider the case where $z_1 = (x, 0)$ and $z_2 = (x, \epsilon)$ for $x, \epsilon > 0$.  In this case, basic geometric reasoning shows that the optimal path joining $z_1$ and $z_2$ can be approximated  (multiplying path length by at most a constant dependent on $\alpha$) by a horizontal line following $(x, 0)$ to $(x + t, 0)$
followed by a vertical line up to $(x + t, \epsilon)$ and another horizontal line back to $(x, \epsilon)$ for some $t > 0$.  Each horizontal line has length $t$ and the vertical line has length $\frac{\epsilon}{(x + t)^{\frac{\alpha}{2}}}$ making the total length
$$2t + \frac{\epsilon}{(x + t)^{\frac{\alpha}{2}}}.$$
Differentiating gives 
$$2 - .5 \alpha \epsilon (x + t)^{-.5 \alpha - 1} = 0$$ so that 
$$x + t = K \epsilon^{\frac{2}{2 + \alpha}}$$ where 
$$K = (\frac{\alpha}{4})^{\frac{2}{2 + \alpha}}$$ is independent of $\epsilon$.

If $t > 0$ then the approximation to the optimal geodesic is indeed found via the method described:  as 
$$2t \leq 2K \epsilon^{\frac{2}{2 + \alpha}}$$ and 
$$\frac{\epsilon}{(x + t)^{\frac{\alpha}{2}}} = K^{\frac{-\alpha}{2}} \epsilon^\frac{2}{2 + \alpha}$$
the path length in question is indeed comparable to $$\epsilon^\frac{2}{2 + \alpha};$$
this is equal to $d((x, 0), (x, \epsilon))$ up to a multiplicative constant because 
$$\frac{\epsilon}{x^{.5 \alpha}} \geq K^{-.5 \alpha} \epsilon^\frac{2}{2 + \alpha}$$
since $$x \leq K \epsilon^{\frac{2}{2 + \alpha}}.$$

If $t \leq 0$ then the method described would involve moving closer to the vertical axis 
(which is inefficient as this increases vertical length) so the approximation to the optimal geodesic is simply a vertical line joining
$(x, 0)$ to $(x, \epsilon)$ which is clearly of length $\frac{\epsilon}{x^{.5 \alpha}}.$  This is equal to $d((x, 0), (x, \epsilon))$ up to a multiplicative constant because 
$$\frac{\epsilon}{x^{.5 \alpha}} \leq K^{-.5\alpha} \epsilon^\frac{2}{2 + \alpha}$$
$$\textrm{since } x \geq K \epsilon^{\frac{2}{2 + \alpha}}.$$

As the multiplicative constants involved only depend on $\alpha$, the lemma is shown.
\end{proof}

\begin{teo}
Let $F:  G_\alpha \rightarrow \textbf{R}^2$ be defined as follows:   $$F(x, y) = (x |x|^{.5 \alpha}, 
y).$$  Then $F$ is quasisymmetric.
\end{teo}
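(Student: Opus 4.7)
The plan is to mimic the structure used for Theorem 1: first establish an analogue of Proposition 1 by producing, for each basepoint $z \in G_\alpha$, an increasing function $f_z : \mathbb{R}_+ \to \mathbb{R}_+$ with
$$C^{-1} f_z(d(z,z')) \leq |F(z) - F(z')| \leq C f_z(d(z,z')),$$
and then deduce quasisymmetry from a scaling property of the $f_z$. As before, the vertical translation invariance of $d_{CC}$ and reflection symmetry in the $y$-axis allow reduction to the case $z = (x, 0)$ with $x \geq 0$, and distances may be computed with the $\ell^\infty$ norm up to a factor of $\sqrt{2}$.

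The case analysis then parallels the three cases in Proposition 1, with exponents $\tfrac{2}{2+\alpha}$ and $\tfrac{\alpha}{2}$ replacing $\tfrac12$ and $1$. In Case 1 ($x = 0$), the same branch-choice argument shows $d(z,z') = \max\{|x'|,\,|y'|^{2/(2+\alpha)}\}$, and since $|F(z)-F(z')| = \max\{|x'|^{(2+\alpha)/2},\,|y'|\}$, the natural choice is $f_z(r) = r^{(2+\alpha)/2}$. In Case 2 ($x > 0$ and $d(z,z') \leq 3x$), a one-dimensional mean value computation applied to $t \mapsto t|t|^{\alpha/2}$ on the interval spanned by $x,x'$ gives $|x|x|^{\alpha/2} - x'|x'|^{\alpha/2}| \asymp x^{\alpha/2}|x - x'|$, while the quasidistance reduces to $\max\{|x-x'|,\,|y'|/x^{\alpha/2}\}$, so $f_z(r) = x^{\alpha/2}\, r$ works. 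For Case 3 ($x > 0$ and $d(z,z') > 3x$), the three subcases match those of Proposition 1: $|x'| \leq x$ and $|x'| < r/3$ reduce to Case 1 giving $f_z(r) = r^{(2+\alpha)/2}$, while $|x'| \geq r/3$ reduces to Case 2 from the $z'$ side, where $f_{z'}(r) = |x'|^{\alpha/2}\, r \asymp r^{1 + \alpha/2}$ because $r/3 \leq |x'| \leq 2r$. Thus in all subcases of Case 3 one takes $f_z(r) = r^{(2+\alpha)/2}$.

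Finally, to pass from this weak-type bound to full quasisymmetry, note that each candidate $f_z$ (either $x^{\alpha/2} r$ or $r^{(2+\alpha)/2}$ with $(2+\alpha)/2 \geq 1$) satisfies $f_z(\epsilon r) \leq \epsilon f_z(r)$ for all $\epsilon \in (0,1)$; combined with the two-sided control above, this yields an $\eta$-quasisymmetry modulus as in the proof of Theorem 1.

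The main obstacle I anticipate is bookkeeping in Case 3.3, where one must verify that switching the roles of $z$ and $z'$ preserves the radius $r$ up to absolute constants and that the $f_{z'}$ produced in Case 2 is then compatible (under the same constant $C$) with the global choice $f_z(r) = r^{(2+\alpha)/2}$. The mean-value comparison in Case 2 also requires checking that the constants depend only on $\alpha$ (not on $x$), since $x'$ could in principle be negative of comparable magnitude to $x$; this is handled by the same elementary estimate used in the $\alpha = 2$ case, now with exponent $\alpha/2$.
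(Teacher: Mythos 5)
Your proposal is correct and follows exactly the route the paper takes: the paper's proof of this theorem simply states that it is the same as the proof of Theorem 8 with $\alpha$-dependent constants, and your write-up carries out precisely that adaptation (the analogue of the weak-quasisymmetry proposition with exponents $\tfrac{2}{2+\alpha}$ and $\tfrac{\alpha}{2}$, the same three-case decomposition, and the concluding scaling property $f_z(\epsilon r)\leq \epsilon f_z(r)$). The only cosmetic discrepancy is that in the paper subcase 3.1 is handled directly rather than by literal reduction to Case 1, but your computation and resulting $f_z(r)=r^{(2+\alpha)/2}$ agree with the paper's.
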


\begin{proof}
The proof is the same as the proof of Theorem 8, although the multiplicative constants depend on $\alpha$.
\end{proof}

In the final part of this paper, we use Theorem 14 as a base map to investigate what is commonly known as the \emph{quasiconformal Jacobian problem}.  One version of the quasiconformal Jacobian problem, as stated in \cite{Bo}, is the following:

\begin{que}
Let $n \geq 2$.  For which locally integrable functions $w \geq 0$ on $\textbf{R}^n$ does there exist a quasiconformal mapping $f:  \textbf{R}^n \rightarrow \textbf{R}^n$ and a constant $C \geq 1$ such that $$\frac{1}{C} w(z) \leq J_f(z) \leq Cw(z)$$
for almost all $z \in \textbf{R}^n$, where $J_f(z) = \textrm{det}(Df(z))$?
\end{que}
\noindent
Another version, as stated in \cite{Sem}, reads as follows:

\begin{que}
Let $n \geq 2$ and $\mu$ be a positive measure on $\textbf{R}^n$.  Define a quasidistance $\delta$ on $\textbf{R}^n \times \textbf{R}^n$ by

$$\delta(z, z') = (\int_{B(z, |z - z'|)} d\mu)^{\frac{1}{n}}.$$
Under which conditions on $\mu$ is $\mu$ comparable in size to the Jacobian of a quasiconformal map from $\textbf{R}^n$ to a larger Euclidean space?  Equivalently, when is $\delta$ bi-Lipschitz equivalent to the standard Euclidean metric on some larger Euclidean space?

\end{que}

Note that the above questions, as stated, only consider locally integrable weights.  For example, in the planar case $n = 2$, the function $f:  \textbf{R}^2 \rightarrow \textbf{R}$ with $f(x, y) = |x|^{\beta}$ can only give a locally integrable measure of the form $\mu = f d\lambda$ (where $\lambda$ is Lebesgue measure) for $\beta \geq -1$.  This motivates the following question:  

\begin{que}
Suppose $\beta \in \textbf{R}$.  Does there exist a quasisymmetric map $H$ from $\textbf{R}^2$ to some Euclidean space whose Jacobian is comparable to $|x|^\beta$ at almost every point $(x, y) \in \textbf{R}^2$?
\end{que}

For the case $\beta > -1$, when $|x|^\beta$ is locally integrable, the result is due to Semmes (\cite{Sem}):  the answer is yes for $-1 < \beta \leq 0$ and no for $\beta > 0$.

Further, we shall show that the answer is yes for the case where $-2 < \beta \leq -1$.  To find this, one additional map from the recent literature is needed as well. 

\begin{teo}
For $\alpha > 0$ there exists a biLipschitz $\phi_\alpha:  G_\alpha \rightarrow \textbf{R}^{n_\alpha}$ if $n_\alpha$ is sufficiently large.
\end{teo}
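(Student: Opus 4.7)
The plan is to invoke a bi-Lipschitz embedding result from the recent literature (the ``additional map'' alluded to just before the statement) and verify its hypotheses for $G_\alpha$. The essential structural features are: (i) $G_\alpha$ is doubling in the Grushin metric; (ii) the singular line $\ell = \{x = 0\}$ inherits, by Lemma 13 together with Definition 12, a metric bi-Lipschitz equivalent to the $\theta$-snowflake of $\textbf{R}$ with $\theta = 2/(2+\alpha) \in (0,1)$; and (iii) off $\ell$, the Grushin plane is locally bi-Lipschitz to Euclidean space, with constants that scale in a controlled way through the weight $|x|^{\alpha/2}$ visible in the quasidistance $d$. The snowflaked line, being a doubling space with exponent $\theta<1$, admits a bi-Lipschitz embedding $\Phi_\ell : \ell \to \textbf{R}^{M_\alpha}$ by Assouad's embedding theorem.

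To assemble $\phi_\alpha$, I would decompose $G_\alpha$ dyadically into shells $S_k = \{2^{-k-1} \leq |x| \leq 2^{-k}\}$ for $k \in \textbf{Z}$ together with the singular line $\ell$. On each shell $S_k$, after rescaling vertically by $2^{-k\alpha/2}$ (the natural factor read off the quasidistance $d$), the Grushin metric is uniformly bi-Lipschitz to the Euclidean metric, so each shell admits a uniformly bi-Lipschitz chart into a fixed-size piece of $\textbf{R}^2$. An Assouad-type partition-of-unity construction --- exactly the cited recent embedding theorem --- then stitches these shell charts together with $\Phi_\ell$ into a single bi-Lipschitz map $\phi_\alpha : G_\alpha \to \textbf{R}^{n_\alpha}$, where $n_\alpha$ is bounded in terms of $\alpha$, the doubling constant of $G_\alpha$, and the Assouad dimension $M_\alpha$ of the snowflaked axis.

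The main obstacle is the gluing/scale-coherence step: one must control the distortion of the composite map uniformly across all scales $k \in \textbf{Z}$, across the boundaries between successive shells, and in the limit onto $\ell$. The max-form of $d$ in Definition 12, which decouples horizontal and vertical contributions at each scale, together with the doubling property of $G_\alpha$, is precisely what allows the cited embedding theorem to apply; once it is invoked, verifying its hypotheses for $G_\alpha$ reduces to Lemma 13 together with the scale-invariant structure of the shells $S_k$ described above. The role of Theorem 14 here is mainly conceptual (it shows $G_\alpha$ is already quasisymmetrically tame), while the bi-Lipschitz statement genuinely requires the higher-dimensional target $\textbf{R}^{n_\alpha}$ produced by the Assouad-type construction.
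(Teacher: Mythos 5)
Your proposal is correct and follows essentially the same route as the paper: the paper's entire proof is a one-sentence appeal to Theorem 1.1 of the cited Seo manuscript, which is exactly the Assouad-type local-to-global embedding theorem you invoke for the gluing step. Your additional verification of its hypotheses (doubling, the snowflaked axis with exponent $2/(2+\alpha)$, and uniformly bi-Lipschitz dyadic shells) is accurate and in fact supplies detail the paper omits.
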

\noindent
This is a direct consequence of Theorem 1.1 in \cite{Seo}.  Armed with these $\phi_\alpha$, we can now state and prove another theorem.

\begin{teo} 
Suppose $-2 < \beta \leq 0$.  Then there exists a quasisymmetric map $\Phi_\beta$ from $\textbf{R}^2$ to some Euclidean space whose Jacobian is comparable to $|x|^\beta$ almost everywhere.
\end{teo}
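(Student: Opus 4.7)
The plan is to compose the inverse of the quasisymmetric map from Theorem 14 with the bi-Lipschitz embedding from Theorem 17, for a suitable value of $\alpha$. For $\beta = 0$ the identity on $\textbf{R}^2$ suffices; for $-2 < \beta < 0$ set $\alpha := -2\beta/(\beta + 2)$, a positive real which ranges over $(0, \infty)$ as $\beta$ ranges over $(-2, 0)$. Writing $F_\alpha$ for the map of Theorem 14, define
$$\Phi_\beta := \phi_\alpha \circ F_\alpha^{-1}: \textbf{R}^2 \to \textbf{R}^{n_\alpha}.$$
Since quasisymmetric homeomorphisms have quasisymmetric inverses, bi-Lipschitz maps are quasisymmetric, and quasisymmetry is closed under composition, $\Phi_\beta$ is automatically quasisymmetric; the content of the proof is therefore the Jacobian identity.

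I would compute $J_{\Phi_\beta}$ in two stages by means of the chain rule for 2-dimensional Jacobians. Viewing $F_\alpha(x, y) = (x|x|^{\alpha/2}, y)$ as a map of Euclidean planes, its coordinate Jacobian is $(1 + \alpha/2)|x|^{\alpha/2}$; setting $(x_0, y_0) = F_\alpha(x, y)$ so that $|x| = |x_0|^{2/(\alpha+2)}$, this gives
$$J_{F_\alpha^{-1}}(x_0, y_0) \approx |x_0|^{-\alpha/(\alpha+2)}.$$
For the second factor, since $\phi_\alpha$ is bi-Lipschitz from $(G_\alpha, d_{CC})$ to $\textbf{R}^{n_\alpha}$ and the 2-dimensional Hausdorff measure on $G_\alpha$ coincides off the $y$-axis with the Riemannian volume $|x|^{-\alpha/2}\, dx\, dy$, the 2-dimensional Jacobian $\sqrt{\det(D\phi_\alpha^T D\phi_\alpha)}$ of $\phi_\alpha$, viewed as a map from Euclidean coordinates on $G_\alpha$ to $\textbf{R}^{n_\alpha}$, is comparable to $|x|^{-\alpha/2}$ at almost every point. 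By the chain rule,
$$J_{\Phi_\beta}(x_0, y_0) \approx |x_0|^{-\alpha/(\alpha+2)} \cdot |x_0|^{-\alpha/(\alpha+2)} = |x_0|^{-2\alpha/(\alpha+2)} = |x_0|^\beta.$$

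The step I expect to need the most care is the pointwise comparability $\sqrt{\det(D\phi_\alpha^T D\phi_\alpha)} \approx |x|^{-\alpha/2}$ rather than merely an integrated comparison, i.e.\ that the bi-Lipschitz embedding $\phi_\alpha$ preserves not only the Hausdorff 2-measure in the large but also the pointwise 2-dimensional Jacobian almost everywhere. I would handle this by combining Rademacher differentiability of $\phi_\alpha$ (applicable away from the $y$-axis, where the Grushin Riemannian metric is locally bi-Lipschitz equivalent to the Euclidean metric) with the observation that an injective linear map $\textbf{R}^2 \to \textbf{R}^{n_\alpha}$ which is bi-Lipschitz between the Grushin tangent inner product at $(x, y)$ and the Euclidean inner product must have 2-dimensional Jacobian comparable to the Riemannian volume density $|x|^{-\alpha/2}$. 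Everything else reduces to the elementary exponent arithmetic displayed above.
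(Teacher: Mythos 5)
Your proposal is correct and follows essentially the same route as the paper: compose $\phi_\alpha \circ F_\alpha^{-1}$ with $\alpha = -2\beta/(\beta+2)$ and track how volume is distorted, which the paper does by noting $F_\alpha^{-1}$ multiplies (Grushin) volume by $|x|^{-2\alpha/(2+\alpha)}$ and $\phi_\alpha$ preserves it up to constants. Your extra care in justifying the pointwise a.e.\ comparability of the $2$-dimensional Jacobian of $\phi_\alpha$ via Rademacher differentiability is a welcome refinement of a step the paper treats only implicitly, but it is not a different argument.
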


\begin{proof}
Because the $\beta = 0$ case is trivial (take $\Phi_0$ to be the identity), we assume $\beta < 0$ for our construction.  Setting $H_\alpha: \textbf{R}^2 \rightarrow \textbf{R}^{n_\alpha}$ as $H_\alpha = \phi_\alpha \circ F_\alpha^{-1}$, we note that $H_\alpha$ is 
clearly quasisymmetric (as a composition of quasisymmetries).  

To analyze $H_\alpha$, we note that locally $F_\alpha$ multiplies volume by $x^{.5 \alpha}$ up to a 
multiplicative constant, so $F_\alpha^{-1}$ (sending the $x$ coordinate 
to $|x|^{\frac{2}{2 + \alpha}}$) locally multiplies volume by 
$|x|^{-\frac {2 \alpha}{2 + \alpha}}$ and therefore $H_\alpha$ has 
Jacobian comparable to $|x|^{-\frac{2 \alpha}{2 + \alpha}}$.  Setting $\Phi_\beta = H_{\frac{-2\beta}{2 + \beta}}$ completes the proof.
\end{proof}

Only the case $\beta \leq -2$ remains open:  in other words, what remains unsolved is

\begin{que}
Let $\beta \leq -2$.  Does there exist a quasisymmetric map $H$ from $\textbf{R}^2$ to some Euclidean space whose Jacobian is comparable to $|x|^{-\beta}$ everywhere?
\end{que}

For one potential idea on how to analyze this, we note that for $-2 < \beta < 0$, the $\Phi_\beta$ constructed are absolutely continuous on lines by construction.  Further, if $F:  \textbf{R}^2 \rightarrow \textbf{R}^N$ had Jacobian comparable to $|x|^{-t}$ everywhere for some $t \geq 2$, then $F$ would not be absolutely continuous on lines because its directional derivative in the $x$ direction, on the order of $|x|^{-\frac{t}{2}}$ almost everywhere, could not be locally integrable on almost all horizontal lines.

Therefore, if $H$ in the above question were required to be absolutely continuous on lines, the answer would be no and our characterization would be complete.  However, by \cite{Bi} there exists a quasiconformal map from $\textbf{R}^2$ to $\textbf{R}^3$ which is \textbf{not} absolutely continous on lines, so we cannot assume \emph{a priori} that $H$ satisfies the ACL condition.


\begin{thebibliography}{1}

\bibitem{Ar} N. Arcozzi and A. Baldi, {\em From Grushin to Heisenberg via an isoperimetric problem}, J. Math. Anal. Appl., 340 (2008), 165-174.

\bibitem{As} K. Astala, T. Iwaniec, G. Martin, {\em Elliptic Partial Differential Equations and Quasiconformal Maps in the Plane}. Princeton University Press, Princeton, 2008.

\bibitem{Be} A. Bellaiche and Jean-Jacques Risler, editors, {\em Sub-Riemannian geometry,} volume 144 of {\em Progress in mathematics}.  Birkh\"{a}user Verlag, Basel, 1996.

\bibitem{Bi} C. Bishop, {\em A quasisymmetric surface with no rectifiable curves,} Proc. Amer. Math. Soc., 127(1999) 2035-2040.

\bibitem{Bo} M. Bonk, J. Heinonen, E. Saksman, {\em The quasiconformal Jacobian problem,} {\em In the tradition of Ahlfors and Bers, III,} vol. 355, {\em Contemp. Math.}, Amer. Math. Soc., Providence, RI, 2004, pp. 77-96.

\bibitem{M} W. Meyerson, {\em Lipschitz and biLipschitz maps on Carnot groups}, to appear in Pacific Journal of Mathematics.

\bibitem{P} P. Pansu, {\em Metriques de Carnot-Caratheodory et Quasiisometries des Espaces Symetriques de rang un,} Ann. of Math. (2) 129 (1989), no. 1, 1-60.

\bibitem{Sem} S. Semmes, {\em Bi-Lipschitz mappings and strong $A_\infty$-weights}, Ann. Acad. Sci. Fenn. Ser. A I Math. 18 (1993), 211-248.

\bibitem{Seo} J. Seo, {\em A Characterization of Bi-Lipschitz Embeddable Metric Spaces in Terms of Local Bi-Lipschitz Embeddability,} completed manuscript. 

\end{thebibliography}
\end{document}